\newtheorem{thm}{Theorem}[section] 
\newtheorem{prp}[thm]{Proposition} 
\newtheorem{lm}[thm]{Lemma} 
\newtheorem{rmk}[thm]{Remark} 
\newtheorem{ex}[thm]{Example} 
\newtheorem{dfn}[thm]{Definition} 
\newenvironment{proof}{\noindent{\bf Proof.}}{\noindent$\Box$\par\medskip}
\newcommand{\A}{{\mathcal A}}
\newcommand{\B}{{\mathcal B}}
\newcommand{\M}{{\mathcal M}}
\newcommand{\lie}{{\mathcal L}}
\newcommand{\F}{{\mathcal F}} 
\newcommand{\supp}{\mathrm {supp}}
\newcommand{\im}{\mathrm {im}}
\newcommand{\sudda}[1]{}
\begin{document}
\date{} 
\title{Decomposition theorems for a generalization of the holonomy Lie algebra of an arrangement}   
\author {Clas L\"ofwall}
\maketitle
\begin{abstract} 
In the paper "When does
the  associated graded Lie algebra of an arrangement group
decompose?" by Stefan Papadima and Alexander Suciu, it is proved that the holonomy Lie algebra of an
arrangement of hyperplanes through origin decomposes as a direct product of
Lie algebras in degree at least two if and only if a certain
(computable) condition
is fulfilled. We prove similar results for a
class of Lie algebras which is a generalization of the holonomy Lie
algebras. The proof methods are the same as in the paper cited above. 
\end{abstract}
\section{Introduction} The holonomy Lie algebra of a central
hyperplane arrangement in $\mathbb C^l$ is defined by generators and relations as
follows (see \cite{den}). Let $x_1,\ldots,x_n$ be the names of $n$
hyperplanes which serve as generators of a free
(ordinary, i.e., non-super) Lie algebra over  the
complex numbers. For all maximal subsets
$\{y_1,\ldots,y_k\}$ of the  hyperplanes
whose intersection has codimension two, there are 
relations $[y_i,\sum_{j=1}^ky_j]$ for $i=1,\ldots,k$. The Koszul dual
(see \cite{fro}) of
the enveloping algebra of the holonomy Lie algebra is the exterior algebra on $e_1,\ldots, e_n$ dual to
the generators $x_1,\ldots,x_n$, modulo the following relations. For
each triple $a,b,c$ of the generators, corresponding to a triple of
hyperplanes whose intersection has codimension two, there is the
relation $ab-ac+bc$. The Orlik-Solomon algebra $\mathcal O$, i.e., the cohomology
of the complement of the union of the hyperplanes, is a quotient of
this algebra with relations of higher degrees obtained in a similar
way (see \cite{den}). The enveloping algebra of the holonomy Lie algebra is the subalgebra
generated by the elements of degree one in the  Yoneda algebra
$\mathrm{Ext}_{\mathcal O}(\mathbb
C,\mathbb C)$, (see \cite {lof}).  The holonomy Lie algebra may be
defined over any coefficient ring. There is a result by Kohno
(\cite{koh}) that the holonomy Lie algebra over the rationals is equal
to the graded associated Lie algebra of the fundamental group of the
arrangement tensored with the rationals.

The holonomy Lie algebra is composed of ``localized" Lie algebras
obtained by restricting to the hyperplanes in a maximal set of
hyperplanes whose intersection has codimension two. These Lie algebras
are free, in degree at least two, on $k-1$ generators, where $k$ is the size of
the maximal set.  
In \cite{pap} it is studied under what circumstances the holonomy Lie algebra 
decomposes
as a direct product of these localized Lie algebras in degree at least two. 

We will generalize this by studying any set of ``localized" Lie
algebras (not even graded) with the only and important property that two different sets of
generators
have at most one generator in common, which obviously is the case for
holonomy Lie algebras. We will prove a decomposition theorem
for ``closed subarrangements" introduced in \cite{pap} and we also get
decomposition results in the case when the Lie algebra is not fully
decomposable. Finally, we give an application with an example of the Lie structure of a non-decomposable
holonomy Lie algebra.

\vspace{10pt}
\noindent{\bf Acknowledgement} This work originates from an intense
cooperative work during the fall 2013 with Jan-Erik Roos and Samuel
Lundqvist, when we tried to find the Lie structure of different
non-decomposable holonomy Lie algebras.

\section{General theory} 
Let $C$ be a commutative ring with unit.
\begin{dfn}\label{arr} A {\rm set-arrangement} on
$X=\{x_1,\ldots,x_n\}$, where $n\geq2$, is a subset $\A$ of all subsets of
$X$ such that each set in $\A$ has at least
two elements,  
two different sets in $\A$ has  at most one element in common, and the
union of all subsets in ${\A}$ is equal to $X$. 
\end{dfn}
A central hyperplane arrangement gives rise to a set-arrangement (see definition below). Consider the set $X$ of all normals of the hyperplanes in a central arrangement. The set of subsets of $X$ consisting of independent normals form a simple matroid, i.e., a matroid which contains all two-elements sets. Any simple matroid defines a set-arrangement consisting of all 2-flats, i.e., the maximal sets which contain no three-elements sets from the matroid. This is indeed a set-arrangement, since the intersection of two different 2-flats is again a flat and since it is not a 2-flat it consists of at most one element. (This is a part of the geometrical lattice consisting of all flats, see \cite{stan}). Also the opposite holds which we formulate as a proposition.

\begin{prp}\label{mat} Let $\A$ be a set-arrangement on $X$, containing every two-elements subset of $X$ which is not a proper subset of any set of $\A$. Then $\A$ is the set-arrangement of all 2-flats of the simple matroid $\M$ consisting of all subsets of $X$ with at most two elements together with all three-elements subsets of $X$ that are not contained in any set of $\A$.
\end{prp}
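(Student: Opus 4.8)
The plan is to check directly that $\M$ is the independent-set family of a (necessarily rank $\le 3$) matroid, that this matroid is simple, and then to verify that its $2$-flats are precisely the members of $\A$.

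For the matroid axioms, closure under taking subsets and $\emptyset\in\M$ are immediate: a subset of a set of size $\le 2$ has size $\le 2$, and a proper subset of an independent triple has size $\le 2$. The substantive point is the exchange axiom for $I,J\in\M$ with $|I|<|J|$. Since every set of size $\le 2$ lies in $\M$, the only case that is not automatic is $|I|=2$, $|J|=3$; write $I=\{a,b\}$ and suppose for contradiction that for each $x\in J\setminus I$ the triple $\{a,b,x\}$ is dependent, hence contained in some $A_x\in\A$. Because two distinct members of $\A$ meet in at most one point, all the $A_x$ coincide with a single $A\in\A$; then $\{a,b\}\cup(J\setminus I)\subseteq A$, so $J\subseteq A$, contradicting $J\in\M$. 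Since $\M$ has no member of size $\ge 4$, the matroid has rank at most three, and it is simple because $\M$ contains every subset of $X$ of size $\le 2$.

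Recall that the $2$-flats of $\M$ are the maximal subsets of $X$ containing no three-element member of $\M$; since $n\ge 2$, such a set has at least two elements. First I would show each $A\in\A$ is a $2$-flat: every three-element subset of $A$ lies in $A\in\A$ and is therefore dependent, so $A$ has rank exactly two; and if $A\subsetneq F$ with $F$ of rank $\le 2$, choosing $x\in F\setminus A$ and distinct $a,b\in A$, the dependent triple $\{a,b,x\}$ lies in some $A'\in\A$, which shares the pair $\{a,b\}$ with $A$ and hence equals $A$, forcing the contradiction $x\in A$. Conversely, let $F$ be a $2$-flat. If $|F|\ge 3$, pick distinct $a,b\in F$; for each $d\in F$ the triple $\{a,b,d\}$ is dependent, hence lies in a member of $\A$ that (sharing $\{a,b\}$) is a fixed $A\in\A$, so $F\subseteq A$; since $A$ is a $2$-flat and $F$ is maximal of rank $\le 2$, $F=A\in\A$. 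If $|F|=2$, say $F=\{a,b\}$, then maximality forces $\{a,b,x\}$ to be independent for every $x\notin F$, so $\{a,b\}$ is not a proper subset of any member of $\A$, and the standing hypothesis on two-element subsets gives $\{a,b\}\in\A$. Thus the $2$-flats of $\M$ are exactly the sets in $\A$, which is the claim.

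The main (though still routine) obstacle is the exchange axiom, and within it the observation that the auxiliary sets $A_x$ must collapse to a single element of $\A$; everything else is a direct dictionary between the incidence combinatorics of $\A$ and the rank-$\le 2$ structure of $\M$. I would also note that the hypothesis on two-element subsets is used exactly once, in the case $|F|=2$ of the last step, and is genuinely needed there.
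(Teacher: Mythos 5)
Your proof is correct and follows essentially the same route as the paper: the augmentation axiom is verified via the same key observation that the sets $A_x\supseteq\{a,b\}$ must all coincide because distinct members of $\A$ share at most one element, and the identification of $\A$ with the $2$-flats is checked by the same two inclusions, with the hypothesis on two-element subsets entering only for two-element flats. The only cosmetic difference is that you establish ``$A\in\A\Rightarrow A$ is a $2$-flat'' first and then reuse it in the converse direction, whereas the paper argues the converse directly; this changes nothing of substance.
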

\begin{proof} We first prove that $\M$ is a matroid. To simplify reading we introduce the notation $xyz$ $independent$ for $\{x,y,z\}\in\M$ and $xyz$ $dependent$ for $\{x,y,z\}\notin\M$. Suppose $|\{x_1,x_2,x_3\}|=3$, $x_1x_2x_3$ independent, $a,b\in X$ and suppose $abx_i$  dependent for $i=1,2,3$. Then for each $i$ there is $A_i\in\A$ such that $\{a,b,x_i\}\subseteq A_i$. But $a\ne b\in A_i$ for all $i$ implies $A_1=A_2=A_3$. Hence  $\{a,b,x_1,x_2,x_3\}\subseteq A_1$ which gives $x_1x_2x_3$ dependent which is a contradiction. (The case when $a=x_1$ is treated similarily.) Hence $\M$ is a simple matroid.

Next we prove that if $A$ is a 2-flat of $\M$ then $A\in\A$. Suppose $A=\{x,y\}$ is a 2-flat and that $A\subset B$ and $B\in\A$. Then for some $z\in X$, $z\ne x,y$, $\{x,y,z\}\subseteq B$ and hence $xyz$ is dependent which contradicts the fact that $A$ is a 2-flat and hence by the assumption on $\A$ we get $A\in\A$. Suppose $A$ is a 2-flat with at least three elements $x,y,z$. Then $xyz$ is dependent and hence there is $B\in\A$ such that $\{x,y,z\}\subseteq B$. Suppose $u\in A$ and $u\ne x,y$. Then $xyu$ is dependent and hence there is $C\in\A$ such that $\{x,y,u\}\subseteq C$. Since $|B\cap C|\ge2$ we must have $C=B$. It follows that $A\subseteq B$. Suppose $v\in B$, $v\ne x,y$. Then $xyv$ is dependent and hence $v\in A$, since $A$ is a 2-flat, which implies that $A=\{x,y\}\cup\{w;\ xyw \text{ dependent}\}$.

Now suppose $A\in\A$. We want to prove that $A$ is a 2-flat of $\M$. If $|A|=2$, then $A\not\subset B$ where $|B|=3$ and $B$ dependent, and hence $A$ is a 2-flat. Suppose $|A|\ge3$. Let $x\ne y\in A$ and let $v\in X$, $v\ne x,y$. We want to prove that $xyv\text{ dependent} \iff v\in A$. Suppose $xyv\text{ dependent}$. Then $\{x,y,v\}\subseteq B$ for some $B\in\A$ and we must have $B=A$ and thus $v\in A$. Suppose $v\in A$. Then $\{x,y,v\}\subseteq A$ and hence $xyv\text{ dependent}$.

\end{proof}

\begin{dfn}\label{lie} A Lie algebra $\lie$ 
of a set-arrangement $\A$ on \hfill\break $X=\{x_1,\ldots,x_n\}$ is a free
ordinary (i.e., non-super)
Lie algebra, $\mathcal{F}(X)$, with coefficients in $C$, modulo
all relations $[x_i,x_j]$ where $\{x_i,x_j\}$ is not a subset
of any $A\in \A$ together with a set of  
relations $R$, $R=\cup_{A\in \A}
R_A$, where $R_A\in \mathcal{F}(A)'=[\mathcal{F}(A),\mathcal{F}(A)]$. The Lie algebra $ \mathcal{F}(A)/\langle R_A\rangle$ is called the localized
Lie algebra of $\lie$ at $A$ and denoted $\lie_A$.
\end{dfn}
 \begin{dfn}\label{set} The corresponding set-arrangement  of a hyperplane arrangement $x_1,\ldots,x_n$ is defined as the set of all maximal subsets $\{y_1,\ldots,y_k\}$ of $\{x_1,\ldots,x_n\}$, whose intersection has codimension two.  
\end{dfn}
The holonomy Lie algebra of a hyperplane arrangement (defined in the introduction) is a Lie algebra of the corresponding set-arrangement, where $[x_i,x_j]=0$ for any two-set $\{x_i,x_j\}$  in the set-arrangement.

\begin{rmk} {\rm If a set-arrangement $\A$ on $X$ is enlarged to $\hat\A$ by adding two-elements subsets of $X$, which are not contained in any set of $\A$, then a Lie algebra of $\A$ is the same as a Lie algebra of $\hat\A$, where the variables in the new two-elements sets commute. On the other hand if $\{x,y\}\in\A$ and $\lie$ is a Lie algebra of $\A$ such that $[x,y]=0$ then $\lie$ is also a Lie algebra of the arrangement $\A'=\A\setminus\{x,y\}$ (if $\A'$ still is a set-arrangement on $X$). For this reason, we may assume that a Lie algebra of a set-arrangement containing a two-set $\{x,y\}$ satisfies $[x,y]\ne0$.} 
\end{rmk}

\vspace{12pt}
Let  $\lie$ be a Lie algebra of a set-arrangement
$\A$. There is a natural Lie algebra morphism
$s_A:\lie_A\to \lie$ for each $A\in\A$. Also, we
may define a Lie algebra morphism $\pi_A:\lie\to\lie_A$
by sending all variables not in $A$ to zero (and a variable in $A$
to itself). This is well-defined, since if $r\in R_B$ where $B\ne A$,
then each Lie monomial in $r$ contains at least one variable which
is not in $A$, since $|A\cap B|\leq1$ and $[x,x]=0$ for all $x\in\lie$; also if for all
$B\in\A$,
$\{a,b\}\not\subseteq B$, then in particular $a\notin A$ or $b\notin A$ and hence
$\pi_A([a,b])=0$.

We have $\pi_A\circ s_A={\rm id}_{\lie_A}$ for each
$A\in\A$. Now consider the derived functor
$\lie'=[\lie,\lie]$. We have for each
$A\in\A$, $s_A':\lie_A'\to
\lie'$, $\pi_A':\lie'\to\lie_A'$ and $\pi_A'\circ
s_A'={\rm id}_{\lie_A'}$. Define $C$-module morphisms $s'$ and
$\pi'$ as
\begin{align*} 
s' &=\sum_{A\in\A}s_A':\ \bigoplus_{A\in\A}\lie_A'\to \lie'\\
\pi'&= \bigoplus_{{A\in\A}}\pi_A':\ \lie'\to\bigoplus_{A\in\A}\lie_A'
\end{align*}
\begin{prp}\label{inj} We have $\pi_B'\circ s_A'=0$ if $B\ne A$,
$\pi'\circ s'={\rm id}_{\oplus\lie_A'}$,
$\pi'$ is surjective, $s'$ is injective, and the $C$-submodule, $\im(s')=\sum\im(s_A')$,  of
$\lie'$ is a direct sum.
\end{prp}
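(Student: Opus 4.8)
The plan is to verify each assertion in order, using the morphisms $s_A'$ and $\pi_A'$ that have already been constructed, together with the single fact that the localized relations interact nicely because $|A\cap B|\le 1$.

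First I would prove $\pi_B'\circ s_A'=0$ for $B\ne A$. The composite $\pi_B\circ s_A:\lie_A\to\lie_B$ sends a variable $x\in A$ to $x$ if $x\in B$ (impossible for more than one $x$ since $|A\cap B|\le 1$) and to $0$ otherwise. So on generators the composite $\pi_B\circ s_A$ either is identically zero or sends exactly one generator $x_0$ of $\lie_A$ to $x_0\in\lie_B$ and all others to $0$; since $\lie_B$ has no nonzero element expressible in a single generator inside $[\lie_B,\lie_B]$, and since $\pi_B'\circ s_A'$ is the restriction to the derived subalgebra, every bracket involving that one surviving generator together with something mapped to $0$ is $0$. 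Hence $\pi_B'\circ s_A'=0$ on all of $\lie_A'$. Combined with $\pi_A'\circ s_A'=\mathrm{id}_{\lie_A'}$ (already noted in the text), this gives for $u=(u_A)_A\in\bigoplus_A\lie_A'$ that $\pi'(s'(u))=\bigl(\sum_A \pi_B'(s_A'(u_A))\bigr)_B=(u_B)_B=u$, i.e. $\pi'\circ s'=\mathrm{id}$.

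From $\pi'\circ s'=\mathrm{id}$ the remaining claims are formal. Since $\pi'\circ s'$ is the identity on $\bigoplus_A\lie_A'$, the map $\pi'$ has a right inverse, hence is surjective, and $s'$ has a left inverse, hence is injective. For the directness of the sum $\sum_A\im(s_A')$ inside $\lie'$: suppose $\sum_{A\in\A}s_A'(u_A)=0$ with $u_A\in\lie_A'$. Apply $\pi_B'$ to both sides; using $\pi_B'\circ s_A'=0$ for $A\ne B$ and $\pi_B'\circ s_B'=\mathrm{id}$ we get $u_B=0$ for every $B$, so each summand $s_A'(u_A)=0$. This shows $\im(s')=\sum_A\im(s_A')$ is a (internal) direct sum, and in particular re-confirms injectivity of $s'$.

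I expect the only real subtlety to be the argument that $\pi_B'\circ s_A'=0$: one must be careful that it is genuinely zero on the derived subalgebra and not merely "supported on one generator," and the clean way to see this is exactly that a single generator spans nothing nonzero in $[\lie_B,\lie_B]$ together with the observation that $\pi_B\circ s_A$, followed by projection to $\lie_B$, kills brackets because any Lie monomial of length $\ge 2$ in the generators of $\lie_A$ must, after applying $\pi_B\circ s_A$, contain a factor sent to $0$ (since at most one generator survives). Everything else is a formal consequence of having a one-sided inverse, so no lengthy computation is needed.
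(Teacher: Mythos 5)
Your proof is correct and follows essentially the same route as the paper: the only substantive point is $\pi_B'\circ s_A'=0$ for $B\ne A$, which both you and the paper establish by observing that a nonzero Lie monomial of length $\ge 2$ in the generators of $A$ must involve a generator outside $B$ (since $|A\cap B|\le 1$ and iterated brackets in a single generator vanish), after which everything else is formal. The paper leaves the formal consequences implicit ("it is enough to prove the first assertion"), whereas you spell them out; there is no real difference in method.
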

\begin{proof} It is enough to prove the first assertion. This follows
 in the same way as above. Indeed, an element in $\im(s_A')$ is represented by an element in $\F(A)'$, in
which
each Lie monomial contains at least one generator which is not in $B$.
\end{proof}
\begin{thm}\label{eq}The following are equivalent for a Lie algebra $\lie$ of a
set-arrangement $\A$ on $X$.
\begin{align*}
1)&\quad[x,\im(s_A')]=0\quad\text{for all }A\in\A\text{
and all }x\in X\setminus A\\
2)&\quad \im(s_A')\quad\text{is an ideal in }\lie\text{ for
all }A\in\A\\
3)&\quad\lie'=\sum_{a\in\A}\im(s_A')\\
4)&\quad s'\text{ is surjective}\\
5)&\quad \pi'\text{ is injective} 
\end{align*}
If the conditions above are satisfied, then
$$
\lie'= \bigoplus_{A\in\A}\im(s_A')\cong\bigoplus_{A\in\A}\lie_A'
$$
as Lie algebras.
\end{thm}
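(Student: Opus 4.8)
The plan is to establish the chain of implications $1)\Rightarrow 2)\Rightarrow 3)\Rightarrow 4)\Rightarrow 5)\Rightarrow 1)$ and then to extract the final Lie-algebra isomorphism from Proposition \ref{inj}. The equivalences among $3)$, $4)$, $5)$ are formal. Since $\im(s')=\sum_{A\in\A}\im(s_A')$ by Proposition \ref{inj}, surjectivity of $s'$ is literally the statement $\lie'=\sum_A\im(s_A')$, so $3)\Leftrightarrow 4)$. For $4)\Leftrightarrow 5)$ one uses only the identity $\pi'\circ s'=\mathrm{id}$: if $s'$ is onto and $\pi'(y)=0$, write $y=s'(z)$ and get $z=\pi's'(z)=0$, hence $y=0$; conversely $\pi'\circ(\mathrm{id}-s'\pi')=0$ forces $\im(\mathrm{id}-s'\pi')\subseteq\ker\pi'$, so injectivity of $\pi'$ gives $s'\pi'=\mathrm{id}$, i.e., $s'$ is onto.

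The two implications that genuinely use the structure of $\lie$ are $5)\Rightarrow 1)$ and $1)\Rightarrow 2)$. For $5)\Rightarrow 1)$, fix $A\in\A$, $x\in X\setminus A$ and $m\in\im(s_A')$, and compute the components of the bracket $[x,m]\in\lie'$ under $\pi'$. Since $x\notin A$ we have $\pi_A'([x,m])=[\pi_A(x),\pi_A'(m)]=[0,\pi_A'(m)]=0$, and for $B\ne A$ we have $\pi_B'(m)=0$ because $m\in\im(s_A')$ and $\pi_B'\circ s_A'=0$ by Proposition \ref{inj}, so $\pi_B'([x,m])=0$ as well. Thus $\pi'([x,m])=0$, and injectivity of $\pi'$ forces $[x,m]=0$. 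For $1)\Rightarrow 2)$, consider the normalizer $N=\{\,y\in\lie : [y,\im(s_A')]\subseteq\im(s_A')\,\}$, which is a Lie subalgebra of $\lie$ by the Jacobi identity. It contains every generator $x\in X$: if $x\notin A$ then $[x,\im(s_A')]=0$ by $1)$, while if $x\in A$ then, since $\lie_A'$ is an ideal of $\lie_A$ and $s_A$ is a Lie morphism fixing $x$, $[x,\im(s_A')]=s_A([x,\lie_A'])\subseteq s_A(\lie_A')=\im(s_A')$. As $X$ generates $\lie$, $N=\lie$, so $\im(s_A')$ is an ideal.

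For $2)\Rightarrow 3)$, observe that $M=\sum_A\im(s_A')$ is an ideal of $\lie$ (a sum of ideals) and that it contains every $[x_i,x_j]$: this bracket vanishes unless $\{x_i,x_j\}\subseteq A$ for the unique $A\in\A$ containing it (uniqueness because $|A\cap B|\le 1$), and in that case $[x_i,x_j]$ is the image under $s_A'$ of the class of $[x_i,x_j]$ in $\lie_A'$, hence lies in $\im(s_A')$. Consequently $\lie/M$ is generated by pairwise commuting elements, so it is abelian, which gives $\lie'\subseteq M$; the reverse inclusion is clear. Finally, assuming $1)$--$5)$ hold: each $s_A'\colon\lie_A'\to\im(s_A')$ is a surjective Lie morphism which is also injective, its left inverse being $\pi_A'$, hence an isomorphism of Lie algebras; and for $A\ne B$ one has $[\im(s_A'),\im(s_B')]\subseteq\im(s_A')\cap\im(s_B')=0$, since both factors are ideals and the sum $\sum_A\im(s_A')$ is direct (Proposition \ref{inj}). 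Together with $4)$, which identifies $\lie'$ with $\bigoplus_A\im(s_A')$ as $C$-modules, this shows $\lie'\cong\bigoplus_A\lie_A'$ as Lie algebras.

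I do not expect any single step to be hard; the real work is bookkeeping --- tracking exactly how $s_A$ and $\pi_A$ act on generators, using the set-arrangement axiom $|A\cap B|\le 1$ to place each $[x_i,x_j]$ in at most one localized piece, and invoking the appropriate parts of Proposition \ref{inj} (the relations $\pi_B'\circ s_A'=0$ for $B\ne A$, and the directness of $\sum\im(s_A')$) at the right moments. The step I would watch most carefully is $2)\Rightarrow 3)$, where one must be sure that $\lie/M$ is genuinely abelian and not merely solvable of higher derived length.
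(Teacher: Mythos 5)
Your proof is correct and follows essentially the same cycle $1)\Rightarrow2)\Rightarrow3)\Rightarrow4)\Rightarrow5)\Rightarrow1)$ as the paper, with the same key ingredients (the relations $\pi_B'\circ s_A'=0$ and $\pi'\circ s'=\mathrm{id}$ from Proposition \ref{inj}, the uniqueness of the $A$ containing a given pair, and the vanishing of cross-brackets for the final isomorphism). The only differences are that you spell out steps the paper calls ``obvious'' --- the normalizer argument for $1)\Rightarrow2)$, the abelian-quotient argument for $2)\Rightarrow3)$, and deriving $[\im(s_A'),\im(s_B')]=0$ from the ideal property plus directness --- all of which are sound.
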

\begin{proof}
1) $\Rightarrow$ 2): obvious, since $[x,\im(s_A')]\subseteq{\rm
im}(s_A')$ if $x\in A$.

2) $\Rightarrow$ 3): By the properties of the defining ideal for
$\lie$, it follows that all elements in $\lie'$ which are
represented by $[x,y]$, where $x,y\in X$, belong to $\sum{\rm
im}(s_A')$. But as an ideal in $\lie$, $\lie'$ is
generated by such elements and hence 3) follows from 2).

3) $\Rightarrow$ 4): obvious

4) $\Rightarrow$ 5): $s'$ is an isomorphism by 4) and Proposition
\ref{inj}. Since $\pi'\circ s'={\rm id}_{\oplus\lie_A'}$ it
follows that $\pi'$ is the inverse of $s'$.

 5) $\Rightarrow$ 1): We have $\pi_A'([x,\im(s_A')])=0$ if $x\notin
A$. Also, by Proposition \ref{inj}, $\pi_B'([x,{\rm
im}(s_A')])=0$ if $B\ne A$. Hence $\pi'([x,\im(s_A')])=0$ if
$x\not\in A$ and hence 1) follows from 5).

The last statement follows by Proposition \ref{inj} and since by 1) and 2) $[\im(s_A'),{\rm
im}(s_B')]=0$ if $A\ne B$. 
\end{proof}

In practise, one might want to check the condition 1) by a computation. If however $\im(s_A')$ is infinite dimensional, this will not give a proof. We will now introduce a condition on $\lie$, which is fullfilled for holonomy Lie algebras, which makes it possible to replace 1) in Theorem  \ref{eq} by the computational effective condition 6) in Theorem \ref{comp} below.

\begin{dfn}\label{rep} A Lie algebra $\lie$ of a set-arrangement $\A$ is said to satisfy the {\rm replacement} condition if for all
$A\in\A$ with $|A|\ge3$ and all $x\in A$ such that for some $B\in\A$, $x\in B$ and $B\ne A$, it holds that $\lie_A'$ is contained in
the Lie subalgebra of $\lie_A$ generated by
$A\setminus \{x\}$.
\end{dfn}

A holonomy Lie algebra satisfies the replacement condition since if
$A=\{y_1,\ldots,y_k\}$ and $x=y_j$ then for all $i\ne j$ we have the relation
$[x,y_i]=\sum_{r\ne j}[y_i,y_r]$ and hence for all $x\in A$, $\lie_A'$ is contained in
the Lie subalgebra of $\lie_A$ generated by
$A\setminus \{x\}$.

\begin{thm}\label{comp} Let $\lie$ be a Lie algebra of a
set-arrangement $\A$ on $X$ and suppose $\lie$ satisfies the
replacement condition and that all two-elements sets in $\A$ are disjoint. Suppose also either that

\vspace{10pt}
\noindent\rm{(i)}\ $\lie$ is graded by giving the generators in $X$ some positive integer degrees

\vspace{6pt}
\noindent or

\vspace{6pt}
\noindent(ii) \ For all $A\in\A$ there is at most one $B\in\A$ such that $|B|=2$ and $A\cap B\ne\emptyset$.
 
\vspace{10pt}
\noindent Then the conditions 1) to 5) in Theorem
\ref{eq} are equivalent to
\begin{align*}
6)\quad[x,[y,z]]=0\text{ for all }A\in\A\text{ and all
}y,z\in A \text{ and }
x\in X\setminus A
\end{align*} 
\end{thm}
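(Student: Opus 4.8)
The plan is to prove the two implications separately. The implication $1)\Rightarrow 6)$ is immediate: for $y,z\in A$ the bracket $[y,z]$ lies in $\F(A)'$, so its image in $\lie$ lies in $\im(s_A')$, whence $[x,[y,z]]=0$ for every $x\in X\setminus A$ by $1)$. All the content is in $6)\Rightarrow 1)$. Here I would first note that, for any $A\in\A$, the submodule $\im(s_A')$ is spanned by the images $s_A(w)$ of brackets $w$ of elements of $A$ of length $\ge 2$, and that each such bracket is a linear combination of right-normed brackets with the same letters; hence $1)$ is equivalent to the statement
$$ (\dagger)\qquad [x,s_A(w)]=0\ \text{ for all }A\in\A,\ x\in X\setminus A,\ \text{and all length-}m\text{ brackets }w\text{ in the elements of }A,\ m\ge 2, $$
which I would prove by induction on $m$ (in the inductive step an arbitrary bracket is first replaced by a combination of right-normed ones), the base case $m=2$ being exactly condition $6)$.

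For the inductive step ($m\ge 3$) I would take $w$ right-normed, so $w=[f,w']$ with $f\in A$ and $w'$ a right-normed bracket of length $m-1\ge 2$, and apply the Jacobi identity: $[x,s_A(w)]=[[x,f],s_A(w')]+[f,[x,s_A(w')]]$, where the second term vanishes by the inductive hypothesis applied to $w'$. If $[x,f]=0$ in $\lie$ we are done, so assume $\{x,f\}\subseteq B$ for some $B\in\A$; then $B\ne A$ (since $x\notin A$, $f\in A$), $f\in A\cap B$, and the disjointness of two-element sets rules out $|A|=|B|=2$, so $|A|\ge 3$ or $|B|\ge 3$. If $|A|\ge 3$, then $f$ is a generator of $A$ also lying in $B\ne A$, so the replacement condition gives $\lie_A'\subseteq\langle A\setminus\{f\}\rangle$; rewriting $w'\in\lie_A'$ as a combination of brackets of length $\ge 2$ in $A\setminus\{f\}$ and using that $\ad_{[x,f]}$ is a derivation of $\lie$, it suffices to show $[[x,f],d]=0$ for each $d\in A\setminus\{f\}$. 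This follows from $6)$ applied twice to the identity $[[x,f],d]=[x,[f,d]]-[f,[x,d]]$: the first term is zero since $f,d\in A$ and $x\notin A$, and the second is zero either trivially (if $[x,d]=0$) or, when $\{x,d\}\subseteq B'$, because $f\notin B'$ — otherwise $\{d,f\}\subseteq A\cap B'$ would force $A=B'$, contradicting $x\in B'\setminus A$ — so $6)$ applies to $B'$. If instead $|B|\ge 3$, then $f$ is a generator of $B$ also lying in $A\ne B$, so $\lie_B'\subseteq\langle B\setminus\{f\}\rangle$; since $x,f\in B$, the element $[x,f]$ of $\lie$ therefore equals a combination of images $s_B(q)$ of brackets $q$ of elements of $B\setminus\{f\}$, and applying the derivation $\ad_{s_A(w')}$ to such an $s_B(q)$ yields a sum of terms each carrying a factor $[s_A(w'),g]=-[g,s_A(w')]$ with $g\in B\setminus\{f\}$; since $A\cap B=\{f\}$ we have $g\notin A$, so $[g,s_A(w')]=0$ by the inductive hypothesis (or directly by $6)$ when $m=3$). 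In either case $[[x,f],s_A(w')]=0$, which completes the induction, and the displayed isomorphism then follows from Theorem \ref{eq}.

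The step I expect to be the main obstacle is exactly this inductive step. The difficulty is that condition $6)$ controls only length-two brackets, and a naive Jacobi manipulation of $[[x,f],s_A(w')]$ merely reproduces $[x,s_A(w)]$; the replacement condition is what makes it possible to first \emph{remove} the obstructing generator $f$ — from $w'$ when $|A|\ge 3$, or from the expression of $[x,f]$ when $|B|\ge 3$ — after which the computation collapses. The hypothesis that two-element sets are disjoint is used to secure the dichotomy ``$|A|\ge 3$ or $|B|\ge 3$'', so that a set of size $\ge 3$, to which the replacement condition applies, is always at hand; the hypotheses (i) and (ii) provide the additional handle on the two-element sets that the argument needs, (i) being the case in which one may instead organise the whole induction as a minimal-degree argument. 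In writing this up I would pay particular attention to verifying that the rewritings furnished by the replacement condition yield only brackets of length $\ge 2$ — which holds because $w'$ lies in $\lie_A'$ (resp.\ because $[x,f]$, viewed in $\lie_B$, lies in $\lie_B'$) while the generators of $A$ (resp.\ of $B$) form a basis of $\lie_A/\lie_A'$ (resp.\ $\lie_B/\lie_B'$) — and to checking that in every appeal to $6)$ the generator bracketed in from outside genuinely lies outside the relevant set.
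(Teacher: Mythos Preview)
Your argument is correct, and it is organised differently from the paper's in a way that actually buys you something.

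The paper fixes $x\notin A$ \emph{first} and then, before starting the induction, uses replacement on $A$ to rewrite every $r\in\im(s_A')$ in the generators $A\setminus\{y_x\}$, where $y_x$ is the (at most one) element of $A$ with $\{x,y_x\}\in\A$. This guarantees that in the Jacobi step the set $B\ni x,a$ has $|B|\ge 3$, so replacement on $B$ is available. But that preliminary rewriting can increase word length, which is exactly why the paper needs hypothesis (i) (induct on degree, which is preserved) or (ii) (a single $y$ works for every $x$, so the generator set can be fixed once and for all).

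You instead leave $w=[f,w']$ as it stands and split on which of $A,B$ has size $\ge3$. When $|A|\ge3$ you rewrite $w'$ into $\langle A\setminus\{f\}\rangle$ and then kill it with the derivation $\ad_{[x,f]}$, checking $[[x,f],d]=0$ for $d\in A\setminus\{f\}$ by two applications of 6); this step uses no inductive hypothesis at all. When $|B|\ge3$ you rewrite $[x,f]$ into $\langle B\setminus\{f\}\rangle$ and kill it with $\ad_{s_A(w')}$, invoking the inductive hypothesis only on the \emph{unrewritten} $w'$ of length $m-1$. Since you never apply the hypothesis to a rewritten element, plain word length is a legitimate induction parameter, and your argument goes through without ever invoking (i) or (ii). So your closing worry is misplaced: with your organisation those hypotheses are simply not used, and you have in fact proved a slightly stronger statement than the one in the paper. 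The remaining care points you flag --- that the rewritings land in the derived subalgebra, and that each appeal to 6) has the outer generator genuinely outside the relevant set --- are handled exactly as you indicate.
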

\begin{proof} Of course, we only have to prove 6) implies 1). Suppose
6) is true. Suppose (i) holds. We prove by induction over the degree of $r$ that $[x,r]=0$  if $x\in X\setminus A$ and $r\in\im(s_A')$. This is true if the degree of $r$ is two by 6). For each $x$ there is at most one $y_x\in A$ such that $\{x,y_x\}\in\A$, and if $y_x$ exists, we must have $|A|\ge3$. Hence, given $x$, we can use replacement to express $r\in\im(s_A')$ in terms of the generators in $A\setminus\{y_x\}$. Suppose now $[x,r]=0$ for all $r\in\im(s_A')$ of degree $\le n$ and all $x\in X\setminus A$. Let $x\in X\setminus A$ be given. Express an element in $\im(s_A')$ of degree $n+1$ in terms of the generators in $A\setminus\{y_x\}$. This element is a linear combinations of elements of the form $[a,r]$ where $a\in A\setminus\{y_x\}$ and $r\in\im(s_A')$ is of degree $\le n$. We want to prove that $[x,[a,r]]=0$. It is enough to prove that $[[x,a],r]=0$. If $[x,a]\ne0$ there is a $B\in\A$ such that $x,a\in B$. Since $a\ne y_x$, we have $|B|\ge3$ and we may use replacement to write $[x,a]=s$ where $s$ is expressed in terms of the generators in $B\setminus \{a\}$. But $A\cap B=\{a\}$. Hence we may use Jacobi and induction to deduce that $[s,r]=0$.

Suppose (ii) holds. We now use induction over word length with respect to a fix representation of elements in $\im(s_A')$ in terms of a choice of generators as follows. By (ii) there is at most one $y\in A$ such that $\{x,y\}\in \A$ for some $x\in X\setminus A$. If $y$ exists we must have $|A|\ge3$. We choose the generators in $A\setminus\{y\}$ (or just $A$ if $y$ does not exist) to express  elements in $\im(s_A')$. Suppose now $[x,r]=0$ for all $x\in X\setminus A$ and all monomials $r$ of word length $\le n$ in the generators in $A\setminus\{y\}$. It is enough to prove that $[[x,a],r]=0$ if $a\in A\setminus\{y\}$ and $r\in\im(s_A')$ is of word length $\le n$. If $[x,a]\ne0$ there is a $B\in\A$ such that $x,a\in B$. Since $a\in A$ and $a\ne y$, we have $|B|\ge3$ and we may finish the proof as above.
\end{proof}

\sudda{
We prove by induction over the word length of $r$ that
$[x,r]=0$ if $x\in X\setminus A$ and $r$ is a Lie monomial, of word length at
least two, built from
generators in $A$. This is true when the word length of $r$ is two by
6). Suppose now $r$ is a Lie monomial such that $[x,r]=0$ for all
$x\in X\setminus A$. Suppose $a\in A$. We want to  prove that $[x,[a,r]]=0$
for all $x\in X\setminus A$. By Jacobi and induction, it is enough to prove
that $[[x,a],r]=0$. If $[x,a]\ne0$, there is a $B\in\A$, $B\ne
A$ such that $x,a\in B$. By replacement, there is an $s$ belonging to
the Lie subalgebra of $\lie_B$ generated by
$B\setminus \{a\}$ such that $[x,a]=s$. Since $B\cap A=\{a\}$, the variables occuring in
$s$ do not belong to $A$. Hence by Jacobi and induction $[s,r]=0$.  

 If the degree of $r$ is 1, there is nothing to prove. Suppose this is true if the degree of $r$ is $<n$ and suppose that the degree of $r$ is $n$. Let $x\in X\setminus A$. Suppose $\{x,y\}\in\A$ where $y\in A$ (if this is not the case we may continue as in (i)). Then by assumption $|A|\ge3$ and we may use replacement to write $r$ as a linear combination of Lie monomials not containing $y$.}

\begin{ex}\label{ex1}  
\begin{align*}X&=\{x,y,z,u,v\},\quad A=\{x,y\},\quad B=\{x,z,u\},\quad
C=\{y,z,v\},\\
\A&=\{A,B,C\},\quad
R_A=\{\},\quad
R_B=\{[z,x]-[u,[u,x]],[z,u]-[u,x]\},\\
R_C&=\{[z,y]-[v,[v,y]],[z,v]-[v,y]\}.
\end{align*}

\rm{
The Lie algebra defined by this example has also the relations $[x,v]=[y,u]=[u,v]=0$ and we claim that it satisfies the replacement condition. Indeed, the form of the relations $R_B$ shows that $\lie_B'$ is contained in the Lie subalgebra of $\lie_B$ generated by $u,x$. Moreover, by writing $R_B$ as $\{[z,x]-[u,[z,u]],[u,x]-[z,u]\}$ we see that $\lie_B'$ is contained in the Lie subalgebra of $\lie_B$ generated by $u,z$. The lie algebra $\lie_C$ is treated similarily. Moreover, the Lie algebra is graded by giving the generators the degrees 2,2,2,1,1, so both (i) and (ii) in Theorem \ref{comp} are satisfied.
To be able to apply Theorem \ref{comp} we only have to check that $[[x,y],z]=[[x,y],u]=[[x,y],v]=
[[x,z],y]=[[x,u],y]=[[x,z],v]=[[x,u],v]=[[y,z],x]=[[y,v],x]=
[[y,z],u]=[[y,v],u]=0$. This may be done by hand (with some effort) or by using the program \texttt{liedim} \cite{lie}.
Hence we get that the derived Lie algebra is isomorphic
to the direct product of the derived Lie algebras
$\lie_A'$,$\lie_B'$,$\lie_C'$. It is easily seen that they all are free on two generators (in degree $\ge2$).}
\end{ex}
  
\section{Closed sub-arrangements}
\begin{dfn}If $\A$ is a set-arrangement and $\B\subseteq\A$, $\supp(\B)$ is defined as
$\cup_{B\in\B}B$. Then $\B$ is a set-arrangement
on  $\supp(\B)$. 
\end{dfn}
Following \cite{pap} we make the definition of a ``closed"
sub-arrangement.
\begin{dfn}If $\A$ is a set-arrangement and $\B\subseteq \A$, $\B$ is called a {\rm closed} sub-arrangement of
$\A$ if for all $A\in\A\setminus\B$,
$|A\cap\supp(\B)|\le1$. 
\end{dfn}
Given a closed sub-arrangement $\B$ of $\A$ and a Lie
algebra $\lie$ of $\A$, we define the localized Lie
algebra at $\B$ as $\lie_{\B}=\F(\supp(\B))/\langle R_{\B}\rangle$, where 
$$
R_{\B}=\cup_{B\in\B}R_B\cup\{[x,y];\ x,y\in\supp(\B) \text{ and }\forall B\in\B\ \{x,y\}\not\subseteq B\}
$$
This extends the old definition of a localized Lie algebra since
$\{A\}$ is a closed sub-arrangement of $\A$ for all
$A\in\A$ and $R_{\{A\}}=R_A$.

By the ``closed" condition, there is a natural Lie algebra map
$$
s_{\B}:\ \lie_{\B}\to \lie
$$
A map $\F(\supp(\A))\to  \lie_{\B}$
is defined by sending the generators $x\notin\supp(\B)$
to zero (and not changing the other generators). We want to prove that
the map factors through $\lie$. If for all $A\in\A$,
$\{x,y\}\not\subseteq A$, then $[x,y]=0$ in $\lie_{\B}$. Moreover, suppose $r\in R_{A}$, $A\notin\B$, then any
non-zero Lie monomial in $r$ must contain at least one generator not in $\supp(\B)$, since $\B$ is closed. Hence $r$ is
mapped to zero and we have defined a map
$$
\pi_{\B}:\ \lie\to \lie_\B
$$
We have $\pi_{\B}\circ s_{\B}={\rm id}_{\lie_\B}$. 
Hence, $s_\B$ is injective and $\pi_\B$ is surjective
and $\lie$ is a semidirect product of $\im(s_\B)$ operating onto $\ker(\pi_\B)$
and $\ker(\pi_\B)$ is the ideal generated by $\supp(\A)\setminus{\supp}(\B)$.

\begin{prp}\label{alg} Let $\B$ be a closed sub-arrangement of $\A$ and let $\lie$ be a Lie algebra of $\A$. Suppose $\lie$ satisfies replacement and there is no $A\in\A$, $|A|=2$ and $|A\cap\supp(\B)|=1$. Then $\ker(\pi_\B)$ is the Lie subalgebra of $\lie$ generated by $\supp(\A)\setminus{\supp}(\B)$.
\end{prp}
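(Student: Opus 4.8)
The plan is to prove that, under the hypotheses, the Lie subalgebra $N$ of $\lie$ generated by $Y:=\supp(\A)\setminus\supp(\B)$ is already an ideal. Granting that, one is done: $N\subseteq\ker(\pi_\B)$ because $\pi_\B$ annihilates $Y$ and $\ker(\pi_\B)$ is a subalgebra, while conversely $\ker(\pi_\B)$ --- which has just been identified with the ideal generated by $Y$ --- is contained in any ideal containing $Y$, in particular in $N$.

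To show the subalgebra $N$ is an ideal, note that $\lie$ is generated as a Lie algebra by $X=\supp(\B)\cup Y$, so it is enough to verify $[x,N]\subseteq N$ for each generator $x\in X$; this is automatic when $x\in Y$, so the work lies in the case $x\in\supp(\B)$. I would argue by induction on word length: $N$ is spanned over $C$ by the iterated brackets (Lie monomials) in the elements of $Y$, and the claim to prove is that $[x,w]\in N$ for every such monomial $w$. The inductive step is purely formal --- for $w=[w_1,w_2]$ with $w_1,w_2$ shorter monomials in $Y$, the Jacobi identity gives $[x,[w_1,w_2]]=[[x,w_1],w_2]-[[x,w_2],w_1]$, and $[x,w_i]\in N$ by induction while $w_i\in N$, so both terms lie in $N$ since $N$ is closed under bracket.

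The base case $w=y\in Y$ is the crux, and the only place the hypotheses enter. If $\{x,y\}$ is contained in no member of $\A$, then $[x,y]=0$ by the defining relations. Otherwise $\{x,y\}\subseteq A$ for some $A\in\A$; since $y\notin\supp(\B)$ we have $A\notin\B$, so the closedness of $\B$ forces $|A\cap\supp(\B)|\le1$, hence $A\cap\supp(\B)=\{x\}$ and therefore $A\setminus\{x\}\subseteq Y$. The hypothesis that no two-element member of $\A$ meets $\supp(\B)$ in exactly one point rules out $|A|=2$, so $|A|\ge3$; and $x\in\supp(\B)$ provides some $B\in\B$ with $x\in B$, necessarily distinct from $A$ because $A\notin\B$. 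Hence the replacement condition applies to $A$ and $x$: $\lie_A'$ lies in the Lie subalgebra of $\lie_A$ generated by $A\setminus\{x\}$. Pushing this forward along $s_A\colon\lie_A\to\lie$, which fixes the generators in $A$, shows that $[x,y]\in\lie_A'$ is carried into the Lie subalgebra of $\lie$ generated by $A\setminus\{x\}\subseteq Y$, i.e.\ into $N$. The main obstacle is exactly this base case --- concretely, checking that replacement is genuinely applicable, which is where both the hypothesis on two-element sets (to get $|A|\ge3$) and the closedness of $\B$ (to get an auxiliary set $B\ne A$ through $x$) are used; once replacement is available, the rest is routine.
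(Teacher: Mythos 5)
Your proof is correct and follows essentially the same route as the paper's: reduce to showing that the subalgebra generated by $\supp(\A)\setminus\supp(\B)$ is an ideal, prove this by induction on word length using the Jacobi identity, and handle the base case by invoking replacement, which applies because closedness forces $A\cap\supp(\B)=\{x\}$ and the hypothesis on two-element sets forces $|A|\ge3$. Your write-up is in fact slightly more careful than the paper's in checking that the replacement condition is genuinely applicable (producing the auxiliary $B\in\B$ with $x\in B$, $B\ne A$).
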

\begin{proof} Let $S$ be the Lie subalgebra of $\lie$ generated by $\supp(\A)\setminus{\supp}(\B)$. Since $S\subseteq\ker(\pi_\B)$ and $\ker(\pi_\B)$ is generated by $\supp(\A)\setminus{\supp}(\B)$, it is enough to prove that $S$ is an ideal. We prove, by induction over the word length $n$ of a Lie monomial $r$ in $S$, that $[b,r]\in S$ for all $b\in\supp(\B)$. 
Suppose $n=1$ and $[b,r]\ne0$. Then there is $A\in\A$ such that $b,r\in A$. Since $A\cap\supp(\B)=\{b\}$ we must have $|A|\ge3$ by assumption. Hence we may use replacement to get $[b,r]\in S$. Suppose the claim is proved for word length $\le n$ and let $r\in S$ be a Lie monomial of word length $n$. We must prove that $[b,[a,r]]\in S$ for all $a\in\supp(\A)\setminus\supp(\B)$ and all $b\in\supp(\B)$. By Jacobi and induction it is enough to prove that $[[b,a],r]\in S$. If $[b,a]\ne0$ there is $A\in\A$ such that $b,a\in A$ and we can continue as above to get $[b,a]\in S$.

\end{proof}
\begin{prp}\label{dir} Let $\B$ be a closed sub-arrangement of $\A$ and let $\lie$ be a Lie algebra of $\A$. Then
$\lie'$ is the direct product of $\im(s_\B')$ and
$\ker(\pi_\B')$ if
$$
[x,\im(s_\B')]=0\quad\text{for all }x\in\supp(\A)\setminus{\supp}(\B)
$$ 
\end{prp}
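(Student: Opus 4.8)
The plan is to show that the $C$-module splitting furnished by $\pi_\B'\circ s_\B'=\mathrm{id}_{\lie_\B'}$ becomes a splitting of Lie algebras once the stated hypothesis is in force, i.e.\ that $\lie'=\im(s_\B')\oplus\ker(\pi_\B')$ with both summands ideals and vanishing cross bracket. The formal half is immediate: $e:=s_\B'\circ\pi_\B'$ is an idempotent endomorphism of $\lie'$ because $\pi_\B'\circ s_\B'=\mathrm{id}$, and since $s_\B'$ is injective and $\pi_\B'$ surjective one has $\im(e)=\im(s_\B')$ and $\ker(e)=\ker(\pi_\B')$; hence $\lie'=\im(s_\B')\oplus\ker(\pi_\B')$ as $C$-modules, so in particular $\lie'=\im(s_\B')+\ker(\pi_\B')$ and $\im(s_\B')\cap\ker(\pi_\B')=0$. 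Also $\ker(\pi_\B')$ is an ideal of $\lie'$, being the kernel of the Lie morphism $\pi_\B'$, and $\im(s_\B')$ is a Lie subalgebra of $\lie'$, being the image of the Lie morphism $s_\B'$.

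Thus the only substantial point is that $\im(s_\B')$ is an \emph{ideal} of $\lie'$, and for that I would verify that it is normalized by every generator $x\in\supp(\A)$ of $\lie$. If $x\in\supp(\B)$, then $x=s_\B(x)$ and for $\ell\in\lie_\B'$ we get $[x,s_\B'(\ell)]=s_\B([x,\ell])$; since $\lie_\B'$ is an ideal of $\lie_\B$ we have $[x,\ell]\in\lie_\B'$, hence $[x,s_\B'(\ell)]\in\im(s_\B')$. If $x\in\supp(\A)\setminus\supp(\B)$, then $[x,\im(s_\B')]=0$ is exactly the hypothesis. Since $\supp(\A)$ generates $\lie$ as a Lie algebra, an induction on bracket length via the Jacobi identity — if $[u,\im(s_\B')]\subseteq\im(s_\B')$ and $[v,\im(s_\B')]\subseteq\im(s_\B')$, then $[[u,v],r]=[u,[v,r]]-[v,[u,r]]\in\im(s_\B')$ for every $r\in\im(s_\B')$ — upgrades this to $[\lie,\im(s_\B')]\subseteq\im(s_\B')$, so $\im(s_\B')$ is an ideal of $\lie$, and a fortiori of $\lie'$.

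Finally, with two ideals of $\lie'$ satisfying $\lie'=\im(s_\B')+\ker(\pi_\B')$ and $\im(s_\B')\cap\ker(\pi_\B')=0$, the cross bracket $[\im(s_\B'),\ker(\pi_\B')]$ lies in both summands and hence is $0$; this is precisely the assertion that $\lie'$ is the direct product of $\im(s_\B')$ and $\ker(\pi_\B')$ as Lie algebras, and $\im(s_\B')\cong\lie_\B'$ because $s_\B'$ is injective. The only genuinely non-formal step is showing $\im(s_\B')$ is an ideal; even there the argument is routine and mirrors the implication from condition~1) to condition~2) in the proof of Theorem~\ref{eq}, now in the relative setting — the hypothesis $[x,\im(s_\B')]=0$ for $x\in\supp(\A)\setminus\supp(\B)$ disposes of the ``outside'' generators, while the ``inside'' generators are handled simply because $s_\B$ is a Lie morphism.
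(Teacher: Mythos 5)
Your proof is correct and takes essentially the same route as the paper's one-line argument (``by Jacobi and induction we get $[\ker(\pi_\B'),\im(s_\B')]=0$''): you use the Jacobi identity and induction on bracket length to show that $\im(s_\B')$ is an ideal of $\lie$, after which the vanishing of the cross bracket is the formal consequence of having two complementary ideals. The only cosmetic difference is that your induction runs over all of $\lie$ via its generators $\supp(\A)$, whereas the paper's implicitly runs over the ideal $\ker(\pi_\B)$ generated by $\supp(\A)\setminus\supp(\B)$; both reduce to the same hypothesis on the outside generators together with $s_\B$ being a Lie morphism.
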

\begin{proof} Obvious, since by Jacobi and induction we get, $[\ker(\pi_\B'),\im(s_\B')]=0$.
\end{proof}

Suppose now we have pairwise disjoint closed sub-arrangements
$\B_i$, $i=1,\ldots,k$ such that $\cup\B_i=\A$. (This gives a partition of $\A$ and not of $\supp(\A)$.) As in the previous section  we may define $C$-module morphisms $s'$ and
$\pi'$ as
\begin{align*} 
s' &=\sum_{i=1}^k s_{\B_i}':\
\bigoplus_{i=1}^k\lie_{\B_i}'\to \lie'\\
\pi'&= \bigoplus_{i=1}^k\pi_{\B_i}':\ \lie'\to\bigoplus_{i=1}^k\lie_{\B_i}'
\end{align*}
\begin{prp}\label{subinj} We have $\pi_{\B_i}'\circ s_{\B_j}'=0$ for $i\ne j$, $\pi'\circ s'={\rm
id}_{\oplus\lie_{\B_i}'}$,
$\pi'$ is surjective, $s'$ is injective and the $C$-submodule, ${\im}(s')=\sum\im(s_{\B_i}')$,  of
$\lie'$ is a direct sum.
\end{prp}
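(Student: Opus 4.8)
The plan is to mimic the proof of Proposition \ref{inj} almost verbatim, the only difference being that singletons $\{A\}$ are replaced by the closed sub-arrangements $\B_i$. As in that proof, everything reduces to the first assertion $\pi_{\B_i}'\circ s_{\B_j}'=0$ for $i\ne j$; once that is in hand, applying $\pi'$ to an element $\sum_j s_{\B_j}'(\ell_j)$ with $\ell_j\in\lie_{\B_j}'$ recovers $(\ell_i)_i$ componentwise (using $\pi_{\B_i}'\circ s_{\B_i}'={\rm id}$, established just before the statement), which simultaneously shows $\pi'\circ s'={\rm id}$, that $s'$ is injective, that $\pi'$ is surjective, and that $\sum_i\im(s_{\B_i}')$ is a direct sum (a relation $\sum_i s_{\B_i}'(\ell_i)=0$ forces every $\ell_i=0$ after applying $\pi_{\B_i}'$).

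So the substantive step is: for $i\ne j$, every element of $\im(s_{\B_j}')$ is killed by $\pi_{\B_i}'$. First I would note that $\im(s_{\B_j}')$ consists of classes in $\lie'$ represented by elements of $\F(\supp(\B_j))'$, i.e.\ $C$-linear combinations of Lie monomials of word length $\ge2$ in the generators lying in $\supp(\B_j)$. The map $\pi_{\B_i}'$ is the restriction to $\lie'$ of $\pi_{\B_i}:\lie\to\lie_{\B_i}$, and $\pi_{\B_i}$ sends every generator $x\notin\supp(\B_i)$ to zero. Hence it suffices to check that each Lie monomial $m$ of length $\ge2$ in the generators of $\supp(\B_j)$ contains at least one generator not in $\supp(\B_i)$, because then $\pi_{\B_i}(m)$ has a factor that is sent to $0$, so $\pi_{\B_i}(m)=0$ (here one uses that $[x,x]=0$, exactly as in the original argument, to handle the case where the monomial uses a single generator repeated).

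The point that makes this work is that $\B_i$ and $\B_j$ are disjoint sub-arrangements and each is closed. I would argue: if a length-$\ge2$ monomial in generators of $\supp(\B_j)$ used only generators from $\supp(\B_i)$, then it would use at least two distinct generators $x,y\in\supp(\B_i)\cap\supp(\B_j)$ (two, because a monomial of length $\ge2$ in a single generator vanishes). Now $x\in\supp(\B_j)$ means $x\in B$ for some $B\in\B_j$, and $B\notin\B_i$ since $\B_i\cap\B_j=\emptyset$; closedness of $\B_i$ then gives $|B\cap\supp(\B_i)|\le1$. But $x\in B\cap\supp(\B_i)$, so $B\cap\supp(\B_i)=\{x\}$, forcing $y\notin B$; symmetrically, taking $B'\in\B_j$ with $y\in B'$ and using closedness of $\B_i$ gives $B'\cap\supp(\B_i)=\{y\}$. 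This does not yet contradict anything directly, so I would instead run the cleanest version: pick $B\in\B_j$ containing $x$; since the monomial has length $\ge2$ it is not a scalar multiple of a bracket of the single generator $x$ with itself, so at least one generator $z\ne x$ of $\supp(\B_j)$ occurs. If $z\notin\supp(\B_i)$ we are done; if $z\in\supp(\B_i)$, then $z\in B\cap\supp(\B_i)$ and $x\in B\cap\supp(\B_i)$ with $x\ne z$ contradicts $|B\cap\supp(\B_i)|\le1$. Wait—this needs $z\in B$, which need not hold; so the correct formulation is to take the particular generator $x$ used in $m$ that does lie in some $B\in\B_j$, and observe every other generator $z$ occurring in $m$ lies in $\supp(\B_j)$, but to get the contradiction one compares with $\supp(\B_i)$, not with $B$. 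The clean statement is therefore: $\supp(\B_i)\cap\supp(\B_j)$ has at most one element, which I would prove first as a lemma from disjointness and closedness (if $x,y$ both lie in the intersection with $x\ne y$, take $B\in\B_j$ with $x\in B$; closedness of $\B_i$ forces $B\cap\supp(\B_i)=\{x\}$, so $y\notin B$; take $B'\in\B_j$ with $y\in B'$, likewise $B'\cap\supp(\B_i)=\{y\}$, so $x\notin B'$; now $B\ne B'$ but this is consistent—so in fact one must also invoke that $\B_j$ itself is a set-arrangement: hmm). Given the delicacy, the main obstacle I anticipate is precisely pinning down that $|\supp(\B_i)\cap\supp(\B_j)|\le1$ cleanly, or else bypassing it by arguing directly on monomials; once that combinatorial fact is secured, the rest is the line-by-line transcription of Proposition \ref{inj}'s proof, and the authors will almost certainly dispatch it with the single sentence ``The proof is the same as that of Proposition \ref{inj}.''
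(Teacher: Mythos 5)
There is a genuine gap, and it sits exactly where you flagged it. The lemma you try to establish --- that $|\supp(\B_i)\cap\supp(\B_j)|\le1$ --- is simply false in general, which is why your argument would not close. For instance, take $\A=\{\{x,u_1,u_2\},\{y,u_1,u_3\},\{x,v_1,v_2\},\{y,v_1,v_3\}\}$ with $\B_i$ the first two sets and $\B_j$ the last two: both are closed, they are disjoint, yet $\supp(\B_i)\cap\supp(\B_j)=\{x,y\}$ has two elements. The paper explicitly warns that the two supports ``may have several elements in common,'' so no amount of massaging the monomial argument will produce the one-element bound.

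The missing idea is instead the following: any two distinct elements $x,y\in\supp(\B_i)\cap\supp(\B_j)$ \emph{commute} in $\lie$ (and in $\lie_{\B_i}$). Indeed, if $[x,y]\ne0$ there is $A\in\A$ with $x,y\in A$; then $|A\cap\supp(\B_i)|\ge2$ forces $A\in\B_i$ by closedness, and likewise $A\in\B_j$, contradicting $\B_i\cap\B_j=\emptyset$. (In the example above, $\{x,y\}$ lies in no set of $\A$, so $[x,y]=0$.) With this in hand the monomial argument does go through: a Lie monomial of word length $\ge2$ in generators of $\supp(\B_j)$ either contains a generator outside $\supp(\B_i)$, and is then killed by $\pi_{\B_i}$, or has all its generators in $\supp(\B_i)\cap\supp(\B_j)$, in which case it already vanishes because those generators pairwise commute. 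Everything else in your write-up --- the reduction of all five claims to $\pi_{\B_i}'\circ s_{\B_j}'=0$ and the transcription of the proof of Proposition~\ref{inj} --- is correct and matches the paper; only this one commutation observation is needed to complete it.
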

\begin{proof} We only need to prove the first assertion. It may happen
that $\supp(\B_i)$ and $\supp(\B_j)$ have
several elements in common, but these elements commute, since if
 $x,y\in\supp(\B_i)\cap{\rm
supp}(\B_j)$ and $[x,y]\ne0$, then there is $A\in\A$
such that $x,y\in A$. By the closed condition, it follows that
$A\in\B_i\cap\B_j$ which is a contradiction if $i\ne j$. Now we
can argue in the same way as in the proof of \ref{inj}. Indeed, in any
non-zero monomial built up by elements in $\supp(\B_j)$
there is at least one element which does not belong to ${\rm
supp}(\B_i)$ if $i\ne j$.
\end{proof} 
\begin{thm}\label{sub} Let $\B_i$, $i=1,\ldots,k$ be pairwise disjoint closed sub-arrange\-ments
  of a set-arrangement $\A$ on $X$, such that $\cup\B_i=\mathcal
A$. The following are equivalent for a Lie algebra $\lie$ of $\A$.
\begin{align*}
1)&\quad[x,\im(s_{\B_i}')]=0\quad\text{for all }i=1,\ldots,k\text{
and all }x\in X\setminus \supp(\B_i)\\
2)&\quad \im(s_{\B_i}')\quad\text{is an ideal in }\lie\text{ for
all }i=1,\ldots,k\\
3)&\quad\lie'=\sum_{i=1}^k\im(s_{\B_i}')\\
4)&\quad s'\text{ is surjective}\\
5)&\quad \pi'\text{ is injective} 
\end{align*}
If the conditions above are satisfied, then
$$
\lie'= \bigoplus_{i=1}^k\im(s_{\mathcal
B_i}')\cong\bigoplus_{i=1}^k\lie_{\B_i}'
$$
as Lie algebras.
\end{thm}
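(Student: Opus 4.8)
The plan is to mimic almost verbatim the proof of Theorem~\ref{eq}, since the new ingredient --- replacing single sets $A\in\A$ by closed sub-arrangements $\B_i$ --- has already been set up to behave exactly like the single-set case. Concretely, Proposition~\ref{subinj} is the exact analogue of Proposition~\ref{inj}, and the maps $s_{\B_i}',\pi_{\B_i}'$ satisfy $\pi_{\B_i}'\circ s_{\B_i}'=\mathrm{id}$ and $\pi_{\B_i}'\circ s_{\B_j}'=0$ for $i\ne j$, which are the only structural facts used in the earlier proof. So I would prove the cycle of implications $1)\Rightarrow 2)\Rightarrow 3)\Rightarrow 4)\Rightarrow 5)\Rightarrow 1)$ in the same order.

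For $1)\Rightarrow 2)$: if $x\in\supp(\B_i)$ then $[x,\im(s_{\B_i}')]\subseteq\im(s_{\B_i}')$ because $\im(s_{\B_i}')$ is the image of a Lie subalgebra under the homomorphism $s_{\B_i}$ and $\lie_{\B_i}'$ is an ideal of $\lie_{\B_i}$; combined with condition $1)$ for $x\notin\supp(\B_i)$, this shows $[X,\im(s_{\B_i}')]\subseteq\im(s_{\B_i}')$, hence $\im(s_{\B_i}')$ is an ideal. For $2)\Rightarrow 3)$: the ideal $\lie'$ is generated by the brackets $[x,y]$ with $x,y\in X$, and each such generator, if nonzero, has $\{x,y\}\subseteq A$ for some $A\in\A$, hence $A\in\B_i$ for some $i$ (since $\cup\B_i=\A$), so $[x,y]\in\im(s_A')\subseteq\im(s_{\B_i}')$; now $\sum_i\im(s_{\B_i}')$ contains all generators of $\lie'$ and is an ideal by $2)$, hence equals $\lie'$. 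The implication $3)\Rightarrow 4)$ is immediate from the definition of $s'$ as $\sum s_{\B_i}'$. For $4)\Rightarrow 5)$: by Proposition~\ref{subinj}, $s'$ is injective, so $4)$ makes $s'$ an isomorphism; since $\pi'\circ s'=\mathrm{id}$, $\pi'$ is the two-sided inverse of $s'$, hence injective. For $5)\Rightarrow 1)$: given $x\notin\supp(\B_i)$ we have $\pi_{\B_i}'([x,\im(s_{\B_i}')])=0$ because $\pi_{\B_i}$ kills $x$, and $\pi_{\B_j}'([x,\im(s_{\B_i}')])=0$ for $j\ne i$ by Proposition~\ref{subinj} (an element of $\im(s_{\B_i}')$ is represented by a Lie element in which every monomial contains a generator outside $\supp(\B_j)$, and bracketing with $x$ does not change that); hence $\pi'([x,\im(s_{\B_i}')])=0$, and injectivity of $\pi'$ gives $[x,\im(s_{\B_i}')]=0$.

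For the final displayed isomorphism: once the equivalent conditions hold, $s'$ is an isomorphism of $C$-modules onto $\lie'$ by Proposition~\ref{subinj} and $4)$, and by $1)$ together with $2)$ we get $[\im(s_{\B_i}'),\im(s_{\B_j}')]=0$ for $i\ne j$ (bracket the ideal $\im(s_{\B_j}')$ with $x\in\supp(\B_i)\setminus\supp(\B_j)$ repeatedly --- or simply note $\im(s_{\B_i}')\subseteq\sum_{x\in X}[x,\lie]$ is generated by brackets $[x,\cdot]$ with $x\in\supp(\B_i)$, which by $1)$ annihilate $\im(s_{\B_j}')$ when $x\notin\supp(\B_j)$, and when $x\in\supp(\B_i)\cap\supp(\B_j)$ it is central by the commuting-elements observation in the proof of Proposition~\ref{subinj}), so the direct sum decomposition of $C$-modules is actually a decomposition as a direct product of Lie algebras, each summand being isomorphic to $\lie_{\B_i}'$ via $s_{\B_i}'$.

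I expect no serious obstacle: the entire argument is a transcription of the proof of Theorem~\ref{eq}, with $A$ replaced by $\B_i$ and $\supp(\B_i)$ playing the role of $A$, and all the needed lemmas (Proposition~\ref{subinj}, the identities $\pi_{\B_i}'\circ s_{\B_j}'=\delta_{ij}$, the closed condition forcing $A\in\B_i$ once two of its elements lie in $\supp(\B_i)$) are already in place. The only point requiring a little care is the step ``$[x,y]$ nonzero implies $A\in\B_i$ for some $i$'': this uses that $\{x,y\}\subseteq A$ for a unique $A\in\A$ (since two distinct sets in $\A$ meet in at most one point) together with $\cup\B_i=\A$, so $A$ lies in exactly one $\B_i$. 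Everything else is formal.
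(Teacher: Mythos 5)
Your proof is correct and follows exactly the route the paper takes: the paper's own proof of Theorem~\ref{sub} simply notes that it is ``very much the same as the proof of Theorem~\ref{eq}'', with Proposition~\ref{subinj} used in the step 5) $\Rightarrow$ 1), which is precisely the cycle of implications you carry out. The one caveat is your parenthetical alternative for $[\im(s_{\B_i}'),\im(s_{\B_j}')]=0$ (an element $x\in\supp(\B_i)\cap\supp(\B_j)$ is not central, and the commuting-elements observation does not make it annihilate $\im(s_{\B_j}')$), but your primary justification --- repeated bracketing using conditions 1) and 2) together with Jacobi and induction --- is the intended and correct one.
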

\begin{proof} Very much the same as the proof of \ref{eq}. In the
proof of 5) $\Rightarrow$ 1) we use Proposition \ref{subinj}. 
\end{proof}
To be able to prove the analogue of Theorem \ref{comp} we need a Lemma.
\begin{lm}\label{lem} Let $\B$ be a closed sub-arrangement of $\A$ and let $\lie$ be a Lie algebra of $\A$. Suppose $\lie$ satisfies the replacement condition and let $x\in\supp(\B)$ but $x\notin B$ where $B\in\B$ and $|B|=2$. Then $\im(s_{\B}')$ is contained in the Lie subalgebra generated by $\supp(\B)\setminus\{x\}$.
\end{lm}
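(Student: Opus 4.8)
The plan is to work inside $\lie$ and to reduce the whole statement to a single fact about brackets with $x$. Set $Y=\supp(\B)\setminus\{x\}$, let $S$ be the Lie subalgebra of $\lie$ generated by $Y$, and let $T=\im(s_\B)$, which is the Lie subalgebra of $\lie$ generated by $\supp(\B)=Y\cup\{x\}$; note that $\im(s_\B')=[T,T]$, since $s_\B$ is a Lie homomorphism with image $T$ and $\lie_\B'=[\lie_\B,\lie_\B]$. The first observation is that the lemma follows from the single claim $[x,S]\subseteq S$. Indeed, under that claim the $C$-submodule $S+Cx$ is a Lie subalgebra of $\lie$, because for $s_1,s_2\in S$ and $\alpha,\beta\in C$ one has $[s_1+\alpha x,s_2+\beta x]=[s_1,s_2]+\alpha[x,s_2]-\beta[x,s_1]\in S$ (using $[x,x]=0$ and $[x,S]\subseteq S$); this subalgebra contains the generating set $Y\cup\{x\}$ of $T$, hence equals $T$, and the same computation then gives $[T,T]\subseteq S$, i.e. $\im(s_\B')\subseteq S$. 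So everything reduces to showing that $[x,m]\in S$ for every Lie monomial $m$ in the variables $Y$.

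This I would establish by induction on the word length of $m$, the base case carrying all the weight. Let $m=u$ be a single generator in $Y$; we must show $[x,u]\in S$, and there is nothing to prove unless $[x,u]\ne0$. In that case the defining relations of $\lie$ force $x$ and $u$ to occur together in some $A\in\A$, and since $x$ and $u$ are two distinct elements of $\supp(\B)$ while $\B$ is closed, this $A$ must belong to $\B$. By hypothesis $x$ lies in no $2$-element member of $\B$, so $|A|\ge3$; and since $x$ also lies in a further member of $\A$, the replacement condition applies to the pair $(A,x)$, giving that $\lie_A'$ lies in the Lie subalgebra of $\lie_A$ generated by $A\setminus\{x\}$. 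The class of $[x,u]$ in $\lie_A$ belongs to $\lie_A'$, hence is a Lie polynomial in $A\setminus\{x\}$, and transporting this identity to $\lie$ along the natural map $s_A:\lie_A\to\lie$ (which sends each generator to itself) shows that $[x,u]$ is a Lie polynomial in $A\setminus\{x\}\subseteq Y$, so $[x,u]\in S$. For the inductive step, write $m=[u,m']$ with $u\in Y$ and $m'$ a strictly shorter monomial in $Y$, and apply the Jacobi identity $[x,m]=[[x,u],m']+[u,[x,m']]$: the first term lies in $[S,S]\subseteq S$ by the base case together with $m'\in S$, and the second term lies in $[S,S]\subseteq S$ by the inductive hypothesis together with $u\in S$. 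Hence $[x,m]\in S$, which closes the induction.

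The delicate step is the base case, and inside it the use of the replacement condition: to be able to remove $x$ from $\lie_A$ one needs both that every member of $\B$ containing $x$ has at least three elements, so that the replacement condition is even formulated there, and that $x$ is contained in a member of $\A$ other than $A$, so that the hypothesis of the replacement condition is met; these are precisely what the standing assumptions on $x$ supply. Beyond the base case the argument is only Jacobi bookkeeping, of exactly the same kind as in the proofs of Proposition \ref{alg} and Theorem \ref{comp}.
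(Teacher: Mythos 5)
Your overall strategy is the intended one: the paper disposes of this lemma with ``easily proven by induction'', and your reduction of the statement to the single claim $[x,S]\subseteq S$ (via the observation that $S+Cx$ is then a subalgebra containing $\supp(\B)$, hence equal to $T=\im(s_\B)$, so that $\im(s_\B')=[T,T]\subseteq S$), followed by induction on word length with the Jacobi identity, is a correct and clean way to organize that induction. The inductive step is fine, as is the identification $\im(s_\B')=[T,T]$ (modulo the standard fact that $S$ is spanned by monomials of the form $[u,m']$ with $u$ a generator, which is the same level of detail the paper itself uses in Proposition \ref{alg}).

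The gap is in the base case, at exactly the point you flag as delicate. Definition \ref{rep} yields $\lie_A'\subseteq\langle A\setminus\{x\}\rangle$ only when, besides $|A|\ge3$ and $x\in A$, the element $x$ belongs to some $B\in\A$ with $B\ne A$. You assert that ``$x$ also lies in a further member of $\A$'' and that this is ``precisely what the standing assumptions on $x$ supply''; it is not. The lemma's hypotheses say only that $x\in\supp(\B)$ and that $x$ lies in no two-element member of $\B$, and these do not force $x$ to lie in two distinct members of $\A$. The step genuinely fails without that: in Example \ref{ex1} take $\B=\{\{x,z,u\}\}$, which is a closed sub-arrangement, and remove the element $u$, which lies in no two-element member of $\B$ but also in no member of $\A$ other than $\{x,z,u\}$; then $[u,x]$ is a nonzero element of $\im(s_\B')$ of odd degree $3$, while the subalgebra generated by $\{x,z\}$ sits in even degrees, so the asserted containment is false. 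In other words, the lemma as literally stated needs the additional hypothesis that the element being removed belongs to at least two members of $\A$ (equivalently, to some member of $\A$ outside the sets of $\B$ containing it); this does hold in the lemma's only application, in the proof of Theorem \ref{six}, where the element removed is $y_x\in\{x,y_x\}\in\A\setminus\B_i$. Your write-up should either add this hypothesis or at least not claim that it follows from the stated ones; with it in place, your base case and hence the whole argument goes through.
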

\begin{proof} This is easily proven by induction.
\end{proof} 

\begin{thm}\label{six} Let $\B_i$, $i=1,\ldots,k$ be pairwise disjoint closed sub-arrange\-ments
  of a set-arrangement $\A$ on $X$, such that $\cup\B_i=\mathcal
A$.
Suppose that $\lie$ satisfies the replacement condition, and that all two-elements sets in $\A$ are disjoint. Suppose also either that

\vspace{10pt}
\noindent{\rm(i)}\ $\lie$ is graded by giving the generators in $X$ some positive integer degrees

\vspace{6pt}
\noindent or

\vspace{6pt}
\noindent{\rm(ii)} \ For all $i=1,\ldots,k$ there is at most one $B\in\A$ such that $|B|=2$ and $\supp(\B_i)\cap B\ne\emptyset$.
 
\vspace{10pt}
\noindent Then the conditions 1) to 5) in Theorem 
\ref{sub} are equivalent to
\begin{align*}
6)\ [x,[y,z]]=0\text{ for all }
y,z\in \supp(\B_i),\
x\in X\setminus\supp(\B_i),\ i=1,\ldots,k 
\end{align*} 
\end{thm}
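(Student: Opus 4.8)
The plan is to mimic the proof of Theorem \ref{comp}, upgrading every step from a single set $A\in\A$ to a closed sub-arrangement $\B_i$, and using Lemma \ref{lem} in place of the bare replacement condition. As before, only the implication $6)\Rightarrow 1)$ requires work. Fix $i$ and write $\B=\B_i$, $S=\supp(\B)$; we must show $[x,r]=0$ for all $x\in X\setminus S$ and all $r\in\im(s_\B')$. The key structural facts we get for free are: $\im(s_\B')$ is spanned by (images of) Lie monomials in the generators belonging to $S$; by Lemma \ref{lem}, if there is a two-element set $\{x,y\}\in\B$ then we may avoid using $y$ as a generator when expressing elements of $\im(s_\B')$; and if $x\in X\setminus S$ with $[x,a]\neq0$ for some $a\in S$, then there is $A\in\A$ with $x,a\in A$, and since $x\notin S$ the closed condition forces $A\notin\B$, hence $A\cap S=\{a\}$.

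In case (i), I would induct on the degree of $r\in\im(s_\B')$. The base case (degree $2$, i.e. $r=[y,z]$ with $y,z\in S$) is exactly hypothesis $6)$. For the inductive step, fix $x\in X\setminus S$. Since all two-element sets in $\A$ are disjoint and each lies in some $\B_j$, there is at most one two-element set $B=\{x,y\}\in\A$ meeting $S$; if it exists, $B\in\B$ (because $x\notin S$ would contradict $B\cap S$ being a singleton unless... ) — more carefully, if $\{x,y\}\in\A$ with $y\in S$ and $x\notin S$ then $\{x,y\}$ is a set of $\A$ with $|A\cap S|=1$, and we must rule this out; this is precisely why we will need that there is no $A\in\A$ with $|A|=2$ and $|A\cap\supp(\B_i)|=1$, a consequence of the two-element sets being disjoint together with $\cup\B_j=\A$. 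Actually, since $\{x,y\}\subseteq\B_j$ for some $j$ and $y\in S=\supp(\B_i)$, if $i\neq j$ then $y\in\supp(\B_i)\cap\supp(\B_j)$ is harmless, but $x$ would then be forced into... — the clean statement is: pick $y\in S$, if $\{x,y\}\in\B$ then $|B|=2$ with $B\subseteq\B$, and Lemma \ref{lem} lets us write any element of $\im(s_\B')$ using generators in $S\setminus\{y\}$; note $|B|=2$ forces $\supp(\B)$ to have other elements, so this is possible. Then an element of degree $n+1$ is a linear combination of $[a,r]$ with $a\in S\setminus\{y\}$ and $r\in\im(s_\B')$ of degree $\le n$; by Jacobi it suffices to show $[[x,a],r]=0$, and if $[x,a]\neq0$ then $x,a$ lie in a common $A\in\A$ with $A\notin\B$, so $A\cap S=\{a\}$ and $|A|\ge3$ (since $a\neq y$ and $\{x,a\}$ can't be a two-element set of $\A$ meeting $S$ in one point). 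Replacement rewrites $[x,a]$ in terms of generators in $A\setminus\{a\}$, all outside $S$, hence outside... and then Jacobi plus induction on degree gives $[[x,a],r]=0$, since $[x,a]$ is a sum of monomials in $X\setminus S$ applied — here one needs a sub-induction, exactly as in Theorem \ref{comp}, reducing to commutators of generators outside $S$ against $r$, handled by the outer induction.

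In case (ii), I would instead induct on word length in a fixed choice of generators: for each $i$, hypothesis (ii) guarantees at most one $B\in\A$ with $|B|=2$ and $B\cap S\neq\emptyset$; if it exists, say $B=\{x_0,y_0\}$ with (after checking) $y_0\in S$, then by Lemma \ref{lem} express all elements of $\im(s_\B')$ as linear combinations of Lie monomials in $S\setminus\{y_0\}$. The induction on word length then runs verbatim as in the second half of the proof of Theorem \ref{comp}: assume $[x,r]=0$ for all $x\in X\setminus S$ and all such monomials $r$ of length $\le n$; it suffices to show $[[x,a],r]=0$ for $a\in S\setminus\{y_0\}$ and $r$ of length $\le n$; if $[x,a]\neq0$ pick $A\ni x,a$ with $A\notin\B$, so $A\cap S=\{a\}$ and $|A|\ge3$, use replacement to rewrite $[x,a]$ via generators of $A\setminus\{a\}\subseteq X\setminus S$, and finish with Jacobi and induction.

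The main obstacle — and the only genuinely new point compared to Theorem \ref{comp} — is bookkeeping around two-element sets: one must be sure that when $x\in X\setminus\supp(\B_i)$ and we rewrite $[x,a]$, the relevant $A\in\A$ has $|A|\ge3$, which can fail only if $\{x,a\}$ itself is a two-element set of $\A$ with $a\in\supp(\B_i)$; ruling this out uses that all two-element sets are disjoint and $\cup\B_j=\A$, so such a set lies in some $\B_j$ entirely, forcing its other element into $\supp(\B_j)$, and one checks this is incompatible with $x\notin\supp(\B_i)$ in the situation at hand. Once this is pinned down, Lemma \ref{lem} supplies exactly the ``replacement'' needed to avoid the at-most-one bad generator $y$ in $\supp(\B_i)$, and the two inductions go through mechanically. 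I expect the write-up to be only a few lines longer than the proof of Theorem \ref{comp}, with the phrase ``we may finish the proof as above'' doing real work.
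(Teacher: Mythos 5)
Your overall strategy --- reduce to $6)\Rightarrow 1)$, induct on degree in case (i) and on word length in case (ii), use closedness to get $A\cap\supp(\B_i)=\{a\}$, and use Jacobi plus replacement to kill $[[x,a],r]$ --- is exactly the paper's, which transplants the proof of Theorem \ref{comp} with Lemma \ref{lem} supplying the replacement step. But the one point you yourself flag as ``the only genuinely new point'', namely the bookkeeping around two-element sets, is where your argument breaks, in two ways. First, you invoke Lemma \ref{lem} backwards: the lemma lets you express $\im(s_\B')$ without a generator $y$ precisely when $y$ does \emph{not} lie in any two-element set of $\B$, whereas you write ``if there is a two-element set $\{x,y\}\in\B$ then we may avoid using $y$'' and later ``if $\{x,y\}\in\B$ then $|B|=2$ \dots\ and Lemma \ref{lem} lets us write any element of $\im(s_\B')$ using generators in $S\setminus\{y\}$''. (Moreover $\{x,y\}\in\B$ is impossible here, since $x\notin S=\supp(\B)$, so as written your conditional is vacuous and the ability to avoid $y$ is never established in the case that actually occurs.) Second, your closing paragraph claims the configuration ``$\{x,y\}\in\A$ a two-element set with $y\in\supp(\B_i)$ and $x\notin\supp(\B_i)$'' can be ruled out using disjointness of two-element sets and $\cup\B_j=\A$. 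It cannot: such a set lies in some $\B_j$ with $j\ne i$, and $y\in\supp(\B_i)\cap\supp(\B_j)$ is perfectly allowed, since the $\B_j$ partition $\A$ but their supports may overlap. This configuration is exactly what the disjointness hypothesis and Lemma \ref{lem} are there to \emph{handle}, not to exclude.

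The correct patch is short. Given $x\in X\setminus\supp(\B_i)$, disjointness of the two-element sets of $\A$ gives at most one $y_x\in\supp(\B_i)$ with $\{x,y_x\}\in\A$. If $y_x$ exists, then $\{x,y_x\}\notin\B_i$, and since $y_x$ already lies in the two-element set $\{x,y_x\}$, disjointness forces $y_x$ to lie in no two-element set belonging to $\B_i$; this is what verifies the hypothesis of Lemma \ref{lem} and lets you express $\im(s_{\B_i}')$ in the generators $\supp(\B_i)\setminus\{y_x\}$. From there your inductive step (including the observation that $a\ne y_x$ forces $|A|\ge3$ for any $A\in\A$ containing both $x$ and $a$) goes through as you describe; in case (ii) one likewise needs Lemma \ref{lem} only when the unique two-element set meeting $\supp(\B_i)$ is not itself in $\B_i$, and when it is in $\B_i$ no generator need be avoided at all.
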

\begin{proof}
This is almost the same as the proof of Theorem \ref{comp}. One only
has to replace $A$ by $\supp(\B_i)$, $B\ne A$ by
$B\notin\B_i$, $\im(s_A')$ by $\im(s_{\B_i}')$  and use that $\B_i$ is closed. The beginning of the proof goes as follows.

Suppose 6) is true and that (i) holds. We prove by induction over the degree of $r$ that $[x,r]=0$  if $x\in X\setminus \supp(\B_i)$ and $r\in\im(s_{\B_i}')$. This is true if the degree of $r$ is two by 6). For each $x$ there is at most one $y_x\in \supp(\B_i)$ such that $\{x,y_x\}\in\A$, and we must have $y_x\notin A\in\B_i$ where $|A|=2$ and hence we may use Lemma \ref{lem}. Hence, given $x$, we can use replacement to express $r\in\im(s_{\B_i}')$ in terms of the generators in $\supp(\B_i)\setminus\{y_x\}$. Now we can continue with the induction hypothesis as in the proof of Theorem \ref{comp} with the replacements given above and also follow the rest of the proof (and use Lemma \ref{lem} again in the proof of (ii)). 
\end{proof}

We end this section with a computable sufficient condition which gives
partial
decomposition in the case the Lie algebra satisfies the replacement
condition.
\begin{prp}\label{last}Let $\B$ be a closed sub-arrangement of $\mathcal
A$ and let $\lie$ be a Lie algebra of $\A$ which
satisfies the conditions given in Theorem \ref{six}. Then
$\lie'$ as a Lie algebra is the direct product of $\im(s_\B')$ and
$\ker(\pi_\B')$ if
$$
[x,[y,z]]=0\quad\text{for all }x\in\supp(\A)\setminus{\supp}(\B)\text{ and all }y,z\in\supp(\B)
$$
\sudda{Moreover, $\ker(\pi_{\B})$ is the Lie {\rm subalgebra} 
generated by $\supp(\A)\setminus\supp(\B)$.}  
\end{prp}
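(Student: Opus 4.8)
The plan is to reduce the statement to Proposition~\ref{dir}, which already asserts that $\lie'$ is the direct product, as Lie algebras, of $\im(s_\B')$ and $\ker(\pi_\B')$ as soon as $[x,\im(s_\B')]=0$ for every $x\in\supp(\A)\setminus\supp(\B)$. Thus the entire content is to promote the finite, computable hypothesis that $[x,[y,z]]=0$ for all $x\in\supp(\A)\setminus\supp(\B)$ and all $y,z\in\supp(\B)$ to the assertion that each such $x$ annihilates the whole Lie subalgebra $\im(s_\B')$.

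For this I would re-run, for the single closed sub-arrangement $\B$, the induction that proves the implication $6)\Rightarrow1)$ in Theorem~\ref{six}; the key observation is that that implication never uses the other blocks of the partition. Concretely, fix $x\in\supp(\A)\setminus\supp(\B)$ and argue by induction on the degree of $r\in\im(s_\B')$ in case (i), or on its word length in a suitable generating system of $\supp(\B)$ in case (ii). The base case is exactly the hypothesis. In the inductive step, let $y$ be the at most one element of $\supp(\B)$ lying in a two-element set of $\A$ together with something outside $\supp(\B)$; disjointness of the two-element sets of $\A$ forces $y\notin B$ for every two-element $B\in\B$, so Lemma~\ref{lem} applies and, together with the replacement condition, lets us rewrite $r$ as a linear combination of brackets $[a,r']$ with $a\in\supp(\B)\setminus\{y\}$ and $r'\in\im(s_\B')$ strictly smaller. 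By the Jacobi identity it then suffices to show $[[x,a],r']=0$.

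This last point is the heart of the matter and the step I expect to cost the most care. If $[x,a]\neq0$, choose $A\in\A$ with $x,a\in A$; since $x\notin\supp(\B)$ we have $A\notin\B$, hence $|A\cap\supp(\B)|\le1$ by closedness, so $A\cap\supp(\B)=\{a\}$; and because $a\neq y$, $A$ cannot be a two-element set of $\A$, so $|A|\ge3$ and the replacement condition rewrites $[x,a]$ as a Lie element $s$ in the generators $A\setminus\{a\}$, all of which lie outside $\supp(\B)$. A secondary induction on the word length of $s$ then gives $[s,r']=0$: each such generator $b$ satisfies $[b,r']=0$ by the outer induction hypothesis, since $b\notin\supp(\B)$ and $r'$ is smaller than $r$, and the bracket of $r'$ with a longer word is handled by Jacobi. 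Hence $[[x,a],r']=0$, which closes the induction and yields $[x,\im(s_\B')]=0$ for all $x\in\supp(\A)\setminus\supp(\B)$.

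Feeding this into Proposition~\ref{dir} gives that $\lie'$ is the direct product of $\im(s_\B')$ and $\ker(\pi_\B')$ as Lie algebras, which is the claim. The only real work beyond invoking the two earlier results is the bookkeeping verifying that the induction of Theorem~\ref{six} uses nothing about the other blocks of a partition: it relies solely on $\B$ being closed, the replacement condition, Lemma~\ref{lem}, disjointness of the two-element sets, and the finitely many relations in hypothesis~6).
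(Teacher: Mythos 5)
Your proposal is correct and follows essentially the same route as the paper: the paper's own proof simply observes that the claim "follows from Proposition \ref{dir} and the proof of Theorem \ref{six}, since the proof there is valid for each $i$ separately," which is precisely the reduction you carry out (with the induction from Theorem \ref{six}, Lemma \ref{lem}, closedness, and disjointness of two-element sets spelled out in more detail).
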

\begin{proof}
The claim follows from Proposition \ref{dir} and the proof of
Theorem \ref{six} since the proof there is valid for
each $i$ separately. \sudda{The last claim is proved in the same way as
Theorem \ref{six}.} 
\end{proof} 
\section{An example}
As an application of Theorem \ref{six}, we will study the holonomy Lie algebra of the hyperplane arrangement
studied in \cite{stu} with notation $(10_3)_3$ (this example was shown
to me by Jan-Erik Roos). It is {\em not} decomposable in the sense of \cite{pap}. It consists of 10 planes
through origin
in $\mathbb C^3$. There are 10 combinations of three planes whose
intersection has
codimension 2 (and no larger sets). We let 6,7,8,1,9,4,5,2,3,10 be  the names
of the planes in the order given in \cite{stu}. We get the following
set-arrangement on $\{1,2,\ldots,10\}$.
\begin{align*}
\A&=\{\{1,2,3\},\{1,4,5\},\{2,4,6\},\{3,5,6\},\\
&\{1,7,8\},\{2,7,10\},\{3,7,9\},\{4,8,10\},\{5,9,10\},\{6,8,9\}\}
\end{align*}
Put $\B_1=\{\{1,2,3\},\{1,4,5\},\{2,4,6\},\{3,5,6\}\}$. It is
easy to see that $\B_1$ is a closed sub-arrangement of $\A$. Its Lie algebra is the Lie algebra of the graphic arrangement $K_4$, the
complete graph with four vertices, see \cite{pap}. Put $\B_i=\{A_i\}$, $i=2,\ldots,7$ where $A_2,\ldots,A_7$
are the remaining sets in $\A$. The holonomy Lie algebra,
$\lie$, has
for each set $\{a,b,c\}$ in $\A$ the relations
$[a,b]=[b,c]=[c,a]$ together with all relations $[x,y]$ such that
$\{x,y\}$ is not a subset of any set in $\A$. 

We  have that $\lie$ is graded by letting
all generators have degree one. In degree two we have the 10
generators $[1,2], [1,4],\ldots, [6,8]$. To be able to use Theorem
\ref{six}, we have to check that $[1,2],[1,4],[2,4],[3,5]$ are
annihilated by $7,8,9,10$ and that $[1,7]$ is annihilated by
$2,3,4,5,6,9,10$ and so on. In total there are 58 triple products that
must be proved to be zero. To do this one can use the program
\texttt{liedim} \cite{lie}; it can compute over any
prime field (but not over the integers). Over the rationals the
program shows that all the 58 products are zero and hence the Lie
algebra $\lie'$ is the direct sum of seven localized Lie
algebras. (In fact, a computation by hand proves that the desired products are zero also over the integers.)

To analyze
$\lie_{\B_1}$, put $A=\{1,2,3\}$. By Proposition
\ref{alg}, $\lie_{\B_1}'$ is a semidirect product of $\lie_A'$ operating onto
  the Lie subalgebra generated by $4,5,6$ (in degree $\ge2$). The Lie
algebra $\lie_A$ is also a semidirect product of the free Lie algebra
on $3$ operating onto the free Lie algebra on $1,2$, where $3.1=[1,2],\ 3.2=[2,1]$. Hence we get that
$\lie_{\B_1}$ is 
$\F(4,5,6)/I\rtimes(\F(1,2)\rtimes\F(3))$ for some ideal $I$.  On the other hand, by defining the
operations of 1,2,3 on $\F(4,5,6)$ and checking that the 
relations $[1,2]-[2,3], [1,2]-[3,1]$ operate as zero one can deduce that
also $\F(4,5,6)\rtimes(\F(1,2)\rtimes\F(3))$ is a quotient of $\lie_{\B_1}$ and hence $I=0$ and
we get 
$$
\lie_{\B_1}\cong \F(4,5,6)\rtimes(\F(1,2)\rtimes\F(3)) 
$$
Finally, we get the following decomposition of $\lie$.
$$
\lie\cong \bigl(\F(4,5,6)\rtimes(\F(1,2)\rtimes\F(3))\bigr)\oplus\bigoplus_{i=2}^7\lie_i
$$
where  $\lie_i\cong\F(1,2)\rtimes\F(3)$, $i=2,\ldots,7$. Also
$$
\lie'\cong (\F(4,5,6)'\rtimes\F(1,2)')\oplus\bigoplus_{i=2}^7\F(1,2)'
$$

\vspace{20pt}
\noindent
Clas L\"ofwall\\
Gim\aa{}gatan 5\\
12848 Bagarmossen\\
Sweden\\

\noindent
e-mail:\\
clas.lofwall@gmail.com
\end{document}